\def\NZQ{\mathbb}               
\def\NN{{\NZQ N}}
\def\frk{\frak}               
\def\Phi{{\frk n}}
\def\Phi{{\frk N}}
\def\KK{{\mathbb K}}
\def\opn#1#2{\def#1{\operatorname{#2}}} 
\opn\chara{char}
\opn\length{\ell}
\opn\pd{pd}
\opn\rk{rk}
\opn\projdim{proj\,dim}
\opn\injdim{inj\,dim}
\opn\rank{rank}
\opn\depth{depth}
\opn\diam{diam}
\opn\grade{grade}
\opn\height{height}
\opn\embdim{emb\,dim}
\opn\codim{codim}
\opn\Tr{Tr}
\opn\bigrank{big\,rank}
\opn\superheight{superheight}
\opn\lcm{lcm}
\opn\trdeg{tr\,deg}
\opn\reg{reg}
\opn\hilb{Hilb}
\opn\hpolynomial{h}
\opn\cdeg{cdeg}
\opn\lreg{lreg}
\opn\ini{in}
\opn\lpd{lpd}
\opn\size{size}
\opn\bigsize{bigsize}
\opn\cosize{cosize}
\opn\bigcosize{bigcosize}
\opn\sdepth{sdepth}
\opn\sreg{sreg}
\opn\link{link}
\opn\fdepth{fdepth}
\opn\lin{lin}
\opn\ini{in}
\opn\div{div}
\opn\Div{Div}
\opn\cl{cl}
\opn\Cl{Cl}
\opn\Spec{Spec}
\opn\Supp{Supp}
\opn\supp{supp}
\opn\Sing{Sing}
\opn\Ass{Ass}
\opn\Min{Min}
\opn\Mon{Mon}
\opn\dstab{dstab}
\opn\astab{astab}
\opn\Syz{Syz}
\opn\Ann{Ann}
\opn\Rad{Rad}
\opn\Soc{Soc}
\opn\Im{Im}
\opn\Ker{Ker}
\opn\Coker{Coker}
\opn\Am{Am}
\opn\Hom{Hom}
\opn\Tor{Tor}
\opn\Ext{Ext}
\opn\End{End}
\opn\Aut{Aut}
\opn\id{id}
\opn\nat{nat}
\opn\pff{pf}
\opn\Pf{Pf}
\opn\GL{GL}
\opn\SL{SL}
\opn\mod{mod}
\opn\ord{ord}
\opn\Gin{Gin}
\opn\Hilb{Hilb}
\opn\sort{sort}
\opn\initial{init}
\opn\ende{end}
\opn\height{height}
\opn\type{type}
\opn\mdeg{mdeg}
\opn\aff{aff}
\opn\con{conv}
\opn\relint{relint}
\opn\st{st}
\opn\lk{lk}
\opn\cn{cn}
\opn\core{core}
\opn\vol{vol}
\opn\link{link}
\opn\star{star}
\opn\lex{lex}
\opn\sign{sign}
\opn\gr{gr}
\def\pot#1#2{#1[\kern-0.28ex[#2]\kern-0.28ex]}
\opn\dirlim{\underrightarrow{\lim}}
\opn\inivlim{\underleftarrow{\lim}}
\let\to=\rightarrow
\def\Implies{\ifmmode\Longrightarrow \else
	\unskip${}\Longrightarrow{}$\ignorespaces\fi}
\def\implies{\ifmmode\Rightarrow \else
	\unskip${}\Rightarrow{}$\ignorespaces\fi}
\def\iff{\ifmmode\Longleftrightarrow \else
	\unskip${}\Longleftrightarrow{}$\ignorespaces\fi}
\newtheorem{Theorem}{Theorem}[section]
\newtheorem{Lemma}[Theorem]{Lemma}
\newtheorem{Corollary}[Theorem]{Corollary}
\newtheorem{Definition}[Theorem]{Definition}
\newtheorem{Conjecture}[Theorem]{Conjecture}
\let\epsilon\varepsilon
\let\kappa=\varkappa
\def\pnt{{\raise0.5mm\hbox{\large\bf.}}}
\begin{document}
	\title{A proof for a conjecture on the regularity of binomial edge ideals}
	\author {M. Rouzbahani Malayeri, S. Saeedi Madani, D. Kiani}
		
	\address{Mohammad Rouzbahani Malayeri, Department of Mathematics and Computer Science, Amirkabir University of Technology (Tehran Polytechnic), Tehran, Iran}
	\email{m.malayeri@aut.ac.ir}
	
		\address{Sara Saeedi Madani, Department of Mathematics and Computer Science, Amirkabir University of Technology (Tehran Polytechnic), Tehran, Iran, and School of Mathematics, Institute for Research in Fundamental Sciences (IPM), Tehran, Iran}
	\email{sarasaeedi@aut.ac.ir}
	
		\address{Dariush Kiani, Department of Mathematics and Computer Science, Amirkabir University of Technology (Tehran Polytechnic), Tehran, Iran, and School of Mathematics, Institute for Research in Fundamental Sciences (IPM), Tehran, Iran}
	\email{dkiani@aut.ac.ir}

	\begin{abstract}
	In this paper we introduce the concept of clique disjoint edge sets in graphs. Then, for a graph $G$, we define the invariant $\eta(G)$ as the maximum size of a clique disjoint edge set in $G$. We show that the regularity of the binomial edge ideal of $G$ is bounded above by $\eta(G)$. This, in particular, settles a conjecture on the regularity of binomial edge ideals in full generality.  
	
	\end{abstract}

	
	\subjclass[2010]{05E40; 16E05; 05C69}
	\keywords{Binomial edge ideals, Castelnuovo-Mumford regularity, compatible maps, clique disjoint edge sets.}
	
	\maketitle
	
\section{Introduction}\label{introduction}
	Let $G$ be a graph on the vertex set $[n]$ and the edge set $E(G)$. Let also $S=\KK[x_1 ,\ldots ,x_n , y_1 , \ldots , y_n]$ be the polynomial ring over a field $\KK$. Then, the \emph{binomial edge ideal} associated to $G$, denoted by $J_G$, is the ideal in $S$ generated by all the quadratic binomials of the form $f_{ij}=x_{i}y_{j}-x_{j}y_{i}$, where $\{i,j\}\in E(G)$ and $i<j$. This class of ideals were introduced in \cite{HHHKR} and \cite{O}, as a natural generalization of determinantal ideals, as well as the ideals generated by the adjacent $2$-minors of a $2\times n$-matrix of indeterminates.
\par In the meantime, many researchers have studied the algebraic properties and homological invariants of binomial edge ideals. A main goal is to understand how the invariants and properties of the ideal and the underlying graph are related, see e.g. \cite{A, BN, BMS, EHH, ERT, EZ, KS2, KumarS, MM, RSK2, SK, SK1, SK2} for some efforts in this direction.
\par One of the most interesting homological invariants associated to binomial edge ideals that has attracted much attention is the \emph{Castelnuovo-Mumford regularity}, (or regularity for simplicity), namely,
\[
\reg S/J_G=\max \{j-i: \beta_{i,j}(S/J_G)\neq 0\}.
\] 
\par In \cite{SK}, the second and third authors of the present paper,   characterized the graphs $G$ for which $\reg S/J_G=1$. They also gave a characterization of the graphs $G$ with $\reg S/J_G=2$, in \cite{SK2}. Another important result about the regularity of this class of binomial ideals appeared in \cite{MM}, where the authors showed that 
$$\mathcal{L}(G)\leq \reg S/J_G\leq n-1,$$
where $\mathcal{L}(G)$ denotes the sum of the lengths of longest induced paths of connected components of $G$. Recently, the upper bound $n-1$ has been slightly improved in \cite{ERT}. In \cite{MM} the authors additionally conjectured that $\reg S/J_G\leq n-2$, if $G$ is not $P_n$, the path on $n$ vertices.  Later, in \cite{KS2}, this conjecture was proved by the second and the third authors of this paper. On the other hand, in \cite{SK} it was shown that $\reg S/J_G\leq c(G)$, for the so-called \emph{closed} graphs (also known as \emph{proper interval} graphs), where $c(G)$ denotes the number of maximal cliques of $G$. Afterwards, in $2013$, the following conjecture regarding the regularity of binomial edge ideals was posed by the second and third authors of this paper, (see \cite[page~12]{SK1} and  \cite[Conjecture~A]{KS2}).
\begin{Conjecture}\label{conj}
Let $G$ be a graph. Then
$$\reg S/J_G\leq c(G).$$
\end{Conjecture}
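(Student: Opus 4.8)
The plan is to prove the sharper bound advertised in the abstract, namely $\reg S/J_G \le \eta(G)$, and then deduce the conjecture from the purely combinatorial inequality $\eta(G) \le c(G)$. Here I take a set $M \subseteq E(G)$ to be \emph{clique disjoint} if no two of its edges lie together in a clique of $G$ (equivalently, every complete subgraph of $G$ contains at most one edge of $M$), and set $\eta(G) = \max |M|$ over such $M$. With this reading, $\eta(G) \le c(G)$ is immediate: given a maximum clique disjoint set $M$, choose for each $e \in M$ a maximal clique $C_e \supseteq e$; if $C_e = C_{e'}$ for some $e \neq e'$, that clique would contain two edges of $M$, a contradiction, so $e \mapsto C_e$ is an injection into the maximal cliques and $|M| \le c(G)$.

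The core is $\reg S/J_G \le \eta(G)$, which I would establish by a double induction: primarily on $|V(G)|$, and secondarily on the number of non-edges $\binom{|V(G)|}{2} - |E(G)|$, so that passing to a graph on the same vertex set with more edges counts as a strictly smaller instance. The engine is Ohtani's decomposition: for any vertex $v$,
\[
J_G = J_{G_v} \cap \bigl(J_{G\setminus v} + (x_v,y_v)\bigr),
\]
where $G_v$ is obtained by completing $N_G(v)$ to a clique and $G\setminus v$ is vertex deletion. Writing $\widetilde G = G_v \setminus v$, the associated short exact sequence
\[
0 \To S/J_G \To S/J_{G_v} \dirsum S/\bigl(J_{G\setminus v}+(x_v,y_v)\bigr) \To S/\bigl(J_{\widetilde G}+(x_v,y_v)\bigr) \To 0
\]
gives
\[
\reg S/J_G \le \max\bigl\{\reg S/J_{G_v},\ \reg S/J_{G\setminus v},\ \reg S/J_{\widetilde G} + 1\bigr\},
\]
after identifying $\reg S/(J_H+(x_v,y_v))$ with the regularity of the binomial edge ideal of $H$ in the ring with $x_v,y_v$ removed. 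It therefore suffices to find a vertex $v$ with $\eta(G_v)\le \eta(G)$, $\eta(G\setminus v)\le \eta(G)$, and $\eta(\widetilde G)\le \eta(G)-1$; the induction hypothesis then bounds all three terms by $\eta(G)$.

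The two non-strict inequalities hold for \emph{every} $v$ and are routine. For $\eta(G\setminus v)\le \eta(G)$, every clique of $G\setminus v$ is already a clique of $G$, so a clique disjoint set of $G\setminus v$ remains clique disjoint in $G$. For $\eta(G_v)\le \eta(G)$, take a maximum clique disjoint set $M$ of $G_v$; since $\{v\}\cup N_G(v)$ is a clique of $G_v$, at most one edge of $M$ can use a newly added edge, and that single edge may be exchanged for an edge $\{v,a\}\in E(G)$ without creating a shared clique in $G$, because any $G$-clique through $v$ lies inside $\{v\}\cup N_G(v)$. This yields a clique disjoint set of $G$ of the same cardinality.

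The strict inequality $\eta(\widetilde G)\le \eta(G)-1$ is the crux, and it genuinely cannot hold for all $v$: when $G$ is a disjoint union of cliques no vertex works, but this is exactly the base case, where $\reg S/J_G = \eta(G)$ is computed directly using additivity of regularity over connected components. The plan is to show that whenever $G$ is \emph{not} a disjoint union of cliques one can choose $v$ so that the drop occurs — completing a non-simplicial neighborhood merges cliques, while for a suitably placed simplicial vertex deletion removes an entire clique's worth of edges — and then run an augmenting/edge-exchange argument converting any clique disjoint set of $\widetilde G$ into one of $G$ carrying an extra edge incident to $v$. Pinning down a uniform choice of $v$ that provably realizes this augmentation in all cases, and checking that the exchange never violates clique disjointness, is where I expect the real difficulty to lie; the homological bookkeeping (the ``compatible maps'' that make the three regularity comparisons align coherently across the induction) should then be comparatively mechanical.
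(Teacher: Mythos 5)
Your overall architecture is exactly the paper's: Ohtani's decomposition at a vertex $v$, the resulting short exact sequence and three-term regularity bound, and the reduction of the conjecture to $\reg S/J_G\le \eta(G)$ together with the easy observation $\eta(G)\le c(G)$. The two non-strict inequalities $\eta(G-v)\le\eta(G)$ and $\eta(G_v)\le\eta(G)$ you verify correctly. But the whole proof hinges on the single strict inequality --- in your formulation $\eta(G_v-v)\le\eta(G)-1$ for a well-chosen $v$ --- and you explicitly leave it unproved (``where I expect the real difficulty to lie''). That is not bookkeeping to be deferred; it is the theorem. Without it the induction does not close, so as written the proposal is a correct reduction plus an unproven combinatorial lemma.

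For the record, the missing fact holds for \emph{every} non-free vertex $v$ (no delicate choice is needed), and in the stronger form $\eta(G_v)<\eta(G)$: since $v$ is non-free there are $\alpha,\beta\in N_G(v)$ with $\{\alpha,\beta\}\notin E(G)$; take a maximum clique disjoint set $\mathcal{H}$ of $G_v$, note that at most one of its edges can interact with the $G_v$-clique $\{v\}\cup N_G(v)$ (this is where clique disjointness in $G_v$ is used), and exchange that one edge for the \emph{two} edges $\{v,\alpha\}$ and $\{v,\beta\}$, checking clique disjointness in $G$ via the fact that every clique of $G$ containing $v$ lies inside $\{v\}\cup N_G(v)$. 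This produces a clique disjoint set of $G$ of size $\eta(G_v)+1$. With the strict drop on $G_v$ in hand, the third term of the exact-sequence bound needs no separate combinatorial inequality: $G_v-v$ is an induced subgraph of $G_v$, so $\reg S_v/J_{G_v-v}\le \reg S/J_{G_v}<\eta(G)$ by induction, and adding $1$ stays within $\eta(G)$. Your induction measure (vertices, then non-edges) would work provided you insist that $v$ is non-free so that $G_v\ne G$; the paper instead inducts on the number of non-free vertices, which all three graphs $G_v$, $G-v$, $G_v-v$ strictly decrease.
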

\par Recall that a \emph{chordal} graph is a graph with no induced cycle of length greater than $3$. In \cite{EZ}, Ene and Zarojanu verified Conjecture \ref{conj} for a class of chordal graphs, called \emph{block} graphs (i.e. chordal graphs in which any two maximal cliques intersect in at most one vertex). In \cite{JK} the conjecture was proved for the so-called \emph{fan} graphs of complete graphs, another subclass of chordal graphs. Afterwards in \cite{RSK}, and later independently in \cite{Kumar2}, the authors verified Conjecture \ref{conj} for all chordal graphs. Very recently, in \cite{KK} the conjecture was proved for $P_4$-free graphs. 
\par In this paper first we supply a general upper bound for the regularity of binomial edge ideals. This bound indeed is based on a new concept that we call it compatible maps. In fact, such maps are defined from the set of all graphs to the set of non-negative integers that admit some specific properties. We also introduce the notion of clique disjoint edge set in graphs. Then, we associate to each graph $G$, a graphical invariant denoted by $\eta(G)$, which is defined as the maximum size of a clique disjoint edge set in $G$. This enables us to provide a good  combinatorial candidate of a compatible map which, in turn, yields a combinatorial upper bound for the regularity of binomial edge ideals. Then, in particular, we settle Conjecture \ref{conj} in full generality.  Furthermore, we compare some of the known bounds for the regularity of binomial edge ideals in some examples. In particular, we give an infinite family $\{G_n\}_{n=1}^{\infty}$ of graphs with 
$$\lim_{n \to \infty}(c(G_n)-\eta(G_n))=\infty.$$
Finally, a natural question regarding the regularity of binomial edge ideals will be posed.  
\par Throughout the paper, all graphs are assumed to be simple (i.e. with no direction, loops and multiple edges).

\section{Upper bounds for the regularity of binomial edge ideals} 
In this section we first introduce the concept of compatible maps from the set of all graphs to the set of non-negative integers. Then, we investigate about the regularity of binomial edge ideals considering this new concept. We also introduce the concept of clique disjoint edge sets  in graphs to provide a combinatorial compatible map. This, in particular,  enables us to prove Conjecture \ref{conj} in full generality.

\par \medskip In the following, for a graph $G$ and $T\subseteq V(G)$, we use the notation $G-T$, for the induced subgraph of $G$ on the vertex set $V(G)\setminus T$. In particular, if $T=\{v\}$, we use the notation $G-v$ instead of $G-\{v\}$, for simplicity. Moreover, we say that $v$ is a   \emph{free} vertex of $G$, if the induced subgraph of $G$ on the vertex set $N_G(v)$ is a complete graph. Also, we set $\widehat{G}=G-\mathcal{I}s(G)$, where $\mathcal{I}s(G)$ denotes the set of isolated vertices of $G$. Moreover, by $K_t$ we mean the complete graph on $t$ vertices, for every $t\in \NN_0=\NN\cup\{0\}$. 
\par Let $G$ be a graph on $V(G)=[n]$ and $v\in [n]$. Associated to the vertex $v$, there is a graph, denoted by $G_{v}$, with the vertex set $V(G)$ and the edge set 
\[
E(G)\cup \{\{u,w\}: \{u,w\}\subseteq N_{G}(v)\},
\]
where $N_{G}(v)$ denotes the set of neighbours of the vertex $v$ in $G$.
\par Now, in the following definition, we introduce certain maps from the set of all graphs to the set of non-negative integers $\NN_0$. This enables us to obtain a general upper bound for the regularity of binomial edge ideals. 
\begin{Definition}\label{varphi}
\em{
Let $\mathcal{G}$ be the set of all graphs. We call a map $\varphi:\mathcal{G}\longrightarrow \NN_0$, \emph{compatible}, if it satisfies the following conditions:
\begin{enumerate}
\item[{(a)}] $\varphi(\widehat{G})\leq \varphi(G)$, for every $G\in \mathcal{G}$;
\item[{(b)}] if $G=\dot{\cup}_{i=1}^{t}K_{n_i}$, where $n_i\geq 2$ for every $1\leq i\leq t$, then $\varphi(G)\geq t$;
\item[{(c)}] if $G\neq \dot{\cup}_{i=1}^{t}K_{n_i}$, then there exists $v\in V(G)$ such that
\begin{enumerate}
\item[{(1)}] $\varphi(G-v)\leq \varphi(G)$, and
\item[{(2)}] $\varphi(G_v)<\varphi(G)$.
\end{enumerate}
\end{enumerate}
}
\end{Definition}

\par We use the following lemma from \cite{Kumar}. In the following, $iv(G)$ denotes the number of non-free vertices of a graph $G$.
  
\begin{Lemma}{\cite[Lemma~3.4]{Kumar}}\label{simplicial}
Let $G$ be a graph and $v$ be a non-free vertex of $G$. Then, 
$\max\{iv(G_v), iv(G-v), iv(G_{v}-v)\}< iv(G)$.
\end{Lemma}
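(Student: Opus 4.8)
The plan is to isolate one monotonicity principle for vertex deletion together with one structural fact about the edge-completion $G\mapsto G_v$, and then assemble the three required inequalities from these two ingredients.

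First I would record the deletion principle: for any graph $H$ and any $u\in V(H)$ one has $iv(H-u)\leq iv(H)$. The reason is that deletion only shrinks neighbourhoods, namely $N_{H-u}(w)=N_H(w)\setminus\{u\}$ for $w\neq u$, and a subset of a clique is again a clique; hence every free vertex of $H$ remains free in $H-u$, so the non-free vertices of $H-u$ form a subset of those of $H$. Specialising to $H=G$ and $u=v$, and using that $v$ is non-free in $G$ by hypothesis (so it disappears from the count while no free vertex is created), this upgrades to $iv(G-v)\leq iv(G)-1<iv(G)$, which handles the term $iv(G-v)$.

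The heart of the argument is $iv(G_v)$, and the main obstacle is that $G_v$ \emph{adds} edges, which could a priori turn a free vertex into a non-free one. I would prove the opposite: every free vertex of $G$ stays free in $G_v$, while the non-free vertex $v$ itself becomes free. For $v$ this is immediate, since $N_{G_v}(v)=N_G(v)$ is by construction a clique in $G_v$. For a free vertex $w\neq v$ with $w\notin N_G(v)$, no new edge is incident to $w$, so $N_{G_v}(w)=N_G(w)$ remains a clique. The delicate case is $w\in N_G(v)$: here I would exploit that $w$ simplicial together with $v\in N_G(w)$ forces $N_G(w)\setminus\{v\}\subseteq N_G(v)$ (each neighbour of $w$ is adjacent to the neighbour $v$ of $w$, as $N_G(w)$ is a clique), whence $N_{G_v}(w)=\{v\}\cup(N_G(v)\setminus\{w\})$, which is a clique in $G_v$ because $N_G(v)$ is a clique there and $v$ is adjacent to all of $N_G(v)$. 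Thus the set of free vertices strictly grows from $G$ to $G_v$, so $iv(G_v)<iv(G)$.

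Finally, the term $iv(G_v-v)$ follows by combining the two pieces: applying the deletion principle to $H=G_v$ gives $iv(G_v-v)\leq iv(G_v)$, and the previous step already shows $iv(G_v)<iv(G)$. Putting the three estimates together yields $\max\{iv(G_v),iv(G-v),iv(G_v-v)\}<iv(G)$, as claimed. I expect the only genuinely nontrivial step to be the containment $N_G(w)\setminus\{v\}\subseteq N_G(v)$ for simplicial $w\in N_G(v)$, since it is precisely this fact that prevents the edge-completion from manufacturing new non-free vertices.
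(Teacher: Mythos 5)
Your proof is correct and complete; note that the paper itself gives no proof of this statement, citing it instead from Kumar's paper (Lemma 3.4 there), whose argument is essentially the same as yours. The two ingredients you isolate are exactly the right ones: deletion can only enlarge the set of free vertices, and the completion $G\mapsto G_v$ makes $v$ free while preserving freeness of every other vertex, the only delicate case being a free $w\in N_G(v)$, which you handle correctly via the containment $N_G(w)\setminus\{v\}\subseteq N_G(v)$ so that $N_{G_v}(w)=\{v\}\cup(N_G(v)\setminus\{w\})$ is a clique of $G_v$. Nothing further is needed.
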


We also need to fix a notation from \cite{HHHKR} that will be used in the next theorem. Let $G$ be a graph on $[n]$ and $T\subseteq [n]$. Assume that $G_1 , \ldots , G_{c_G(T)}$ are the connected components of $G-T$. Let $\widetilde{G}_1 , \ldots , \widetilde{G}_{c_G(T)}$ be the complete graphs on the vertex sets $V(G_1), \ldots , V(G_{c_G(T)})$, respectively. Now, by $P_T(G)$ we mean the prime ideal 
\[
P_T(G)=(x_i,y_i)_{i\in T} +J_{\widetilde{G}_1}+\cdots +J_{\widetilde{G}_{c_G(T)}},
\]
in the polynomial ring $S$.

Now, we are ready to state our first main theorem that establishes a general upper bound for the regularity of binomial edge ideals. 	
\begin{Theorem}\label{main1}
Let $G$ be a graph on $[n]$ and $\varphi$ be a compatible map. Then 
$$\reg S/J_G\leq \varphi(G).$$
\end{Theorem}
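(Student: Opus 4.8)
The plan is to prove the inequality $\reg S/J_G \le \varphi(G)$ by induction on the number of vertices $n$, using the structural dichotomy built into the definition of a compatible map. The base case consists of graphs that are disjoint unions of complete graphs (together with possibly isolated vertices). For such a graph $G = \dot{\cup}_{i=1}^{t} K_{n_i}$, the binomial edge ideal decomposes according to the connected components, and each $J_{K_{n_i}}$ is the ideal of $2$-minors of a generic $2 \times n_i$ matrix, which has a linear resolution; consequently $\reg S/J_{K_{n_i}} = 1$ for each $i$ with $n_i \ge 2$, and isolated vertices contribute nothing. Since regularity is additive over the tensor product corresponding to a disjoint union of graphs, $\reg S/J_G = t$. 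Condition (b) of Definition~\ref{varphi} then gives exactly $t = \reg S/J_G \le \varphi(G)$, which settles this case (and incidentally shows condition (b) is tailored to make it work).

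For the inductive step I would treat a graph $G$ that is \emph{not} a disjoint union of complete graphs. First I would reduce to the case $G = \widehat{G}$: isolated vertices do not affect $J_G$, so $\reg S/J_G = \reg S/J_{\widehat{G}}$, and condition (a) ensures $\varphi(\widehat{G}) \le \varphi(G)$, so it suffices to bound $\reg S/J_{\widehat{G}}$ by $\varphi(\widehat{G})$. Now condition (c) supplies a vertex $v \in V(G)$ satisfying $\varphi(G-v) \le \varphi(G)$ and $\varphi(G_v) < \varphi(G)$. The key tool is the standard short exact sequence relating $J_G$, $J_{G_v}$, and $J_{G-v}$ associated to such a vertex. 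The operation $G \mapsto G_v$ (completing the neighborhood of $v$) is precisely designed so that in $G_v$ the vertex $v$ becomes free, and there is a well-known short exact sequence
\[
0 \longrightarrow S/J_G \longrightarrow S/J_{G_v} \oplus S/((x_v,y_v)+J_{G-v}) \longrightarrow S/((x_v,y_v)+J_{(G_v)-v}) \longrightarrow 0,
\]
from which one extracts, via the standard regularity inequalities for short exact sequences, a bound of the form
\[
\reg S/J_G \le \max\{\reg S/J_{G_v},\ \reg S/J_{G-v} + 1,\ \reg S/J_{(G_v)-v} + 1\}
\]
(or a close variant). I would then apply the induction hypothesis to each of the three graphs $G_v$, $G-v$, and $(G_v)-v$, each of which has fewer vertices or is handled by the decreasing $\varphi$-value.

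The delicate bookkeeping — and what I expect to be the main obstacle — is reconciling the $+1$ shifts appearing in the short exact sequence estimate with the flat inequalities $\varphi(G-v) \le \varphi(G)$ and $\varphi(G_v) < \varphi(G)$ in condition (c). The strict inequality $\varphi(G_v) < \varphi(G)$ is clearly there to absorb a $+1$: if $\reg S/J_{G_v}$ contributes a term that must be increased by one somewhere in the argument, then $\varphi(G_v) < \varphi(G)$ (i.e. $\varphi(G_v)+1 \le \varphi(G)$) is exactly what makes the bound close. The correct way to organize this is probably not to induct on a single invariant but to perform a secondary induction on the non-free-vertex count $iv(G)$, invoking Lemma~\ref{simplicial}, which guarantees $iv(G_v)$, $iv(G-v)$, and $iv(G_v - v)$ are all strictly smaller than $iv(G)$ for a non-free vertex $v$. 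This lets me apply an inductive hypothesis to $G_v$ (same vertex set, so ordinary vertex-count induction alone would fail) while keeping the $\varphi$-values controlled. So the real work is choosing the induction parameter carefully and checking that the relevant vertex $v$ from condition (c) can be taken to be non-free (which it must be, since $G \ne \dot{\cup} K_{n_i}$ forces a non-free vertex to exist), and then verifying that each branch of the $\max$ is dominated by $\varphi(G)$ using the appropriate clause of condition (c).
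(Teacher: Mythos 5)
Your overall strategy coincides with the paper's: the same base case (disjoint unions of complete graphs, handled via conditions (a) and (b)), the same short exact sequence attached to the vertex $v$ supplied by condition (c), and ultimately the same induction parameter $iv(G)$ via Lemma~\ref{simplicial}. However, the ``delicate bookkeeping'' you defer is not a technicality --- it is where the proof lives, and as written your plan does not close. Two concrete problems. First, your stated regularity estimate puts a $+1$ on the term $\reg S/J_{G-v}$. The correct inequality extracted from $0 \to A \to B \to C \to 0$ is $\reg A \le \max\{\reg B, \reg C + 1\}$: only the cokernel term is shifted, and here $S/J_{G_v} \oplus S_v/J_{G-v}$ is the \emph{middle} term. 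With your version, the flat inequality $\varphi(G-v) \le \varphi(G)$ from condition (c)(1) cannot absorb the $+1$, and the bound fails; with the correct version, no absorption is needed for that term. You flag this tension yourself but do not resolve it, and the resolution is simply that the $+1$ is not there.

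Second, and more seriously, for the genuinely shifted term $\reg S_v/J_{G_v - v} + 1$ you propose to ``apply the induction hypothesis to $(G_v)-v$,'' which yields the bound $\varphi(G_v - v) + 1$. Nothing in Definition~\ref{varphi} controls $\varphi(G_v - v)$ relative to $\varphi(G)$, so this branch of the $\max$ is left unbounded. The paper's argument avoids the induction hypothesis here entirely: since $G_v - v$ is an induced subgraph of $G_v$, one has $\reg S_v/J_{G_v-v} \le \reg S/J_{G_v}$ by the induced-subgraph monotonicity of regularity for binomial edge ideals (\cite[Proposition~8,~part~(b)]{SK1}), and then $\reg S/J_{G_v} \le \varphi(G_v) < \varphi(G)$ gives $\reg S_v/J_{G_v-v} + 1 \le \varphi(G)$. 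So the strict inequality in condition (c)(2) is indeed what absorbs the $+1$, but it does so through the $G_v$ bound transported to $G_v - v$, an ingredient absent from your outline. (A minor further point: the vertex $v$ from condition (c) is automatically non-free, not merely because some non-free vertex exists, but because a free $v$ would give $G_v = G$ and contradict $\varphi(G_v) < \varphi(G)$; this is what licenses the use of Lemma~\ref{simplicial}.)
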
	
\begin{proof}
\par We prove the assertion by induction on $iv(G)$. If $iv(G)=0$, then $G$ is a disjoint union of complete graphs. Let $\widehat{G}=\dot{\cup}_{i=1}^{t}K_{n_i}$, where $n_i\geq 2$ for every $1\leq i \leq t$. It is well-known that $\reg S/J_G=\reg \widehat{S}/J_{\widehat{G}}$, where $\widehat{S}=\KK[x_i,y_i:i\in[n]\setminus \mathcal{I}s(G)]$. By \cite[Theorem~2.1]{SK}, we have $\reg \widehat{S}/J_{\widehat{G}}=t$. On the other hand, we have $t\leq \varphi(\widehat{G})\leq \varphi(G)$, by Definition \ref{varphi}, parts $(a)$ and $(b)$. Therefore, in this case the assertion holds. 
\par Now, we assume that $iv(G)>0$. Let $v\in [n]$ be the desired vertex for $\varphi$ in condition $(c)$ in Definition \ref{varphi}. 
\par Let $Q_1=\bigcap\limits_{\substack{T\subseteq [n],\\v\notin T}}P_T(G)$  and $Q_2=\bigcap\limits_{\substack{T\subseteq [n],\\v\in T}}P_T(G)$. We have that $Q_1=J_{G_v}$, $Q_2=(x_v,y_v)+J_{G-v}$ and also $Q_1+Q_2=(x_v,y_v)+J_{{G_v}-v}$, see \cite[Proof~of~Theorem~1.1]{EHH} and  \cite[Proof~of~Theorem~3.5]{RSK}. Therefore, the short exact sequence
\[
0\longrightarrow  \dfrac{S}{J_G}\longrightarrow \dfrac{S}{J_{G_v}}\oplus \dfrac{S_v}{J_{G-v}} \longrightarrow  \dfrac{S_v}{J_{{G_v}-v}}\longrightarrow 0,
\]
is induced, where $S_v=\KK[x_i,y_i:i\in[n]\setminus \{v\}]$. 
\par Now, the well-known regularity lemma implies that
\begin{equation}\label{exact}
\reg S/J_G\leq \max \{\reg S/J_{G_v}, \reg S_v/J_{G-v}, \reg S_v/J_{G_v-v}+1\}.
\end{equation}
By Lemma \ref{simplicial} and by the induction hypothesis, we get
\begin{equation}\label{first}
\reg S/J_{G_v}\leq \varphi(G_v)<\varphi (G),
\end{equation}
and
\begin{equation}\label{second}
\reg S_v/J_{G-v}\leq \varphi(G-v)\leq \varphi(G).
\end{equation}
Since $G_{v}-v$ is an induced subgraph of $G_v$, by \cite[Proposition~8,~part~(b)]{SK1} we have $\reg S_v/J_{G_v-v}\leq \reg S/J_{G_v}$, and hence by \eqref{first} we get 
\begin{equation}\label{third}
\reg S_v/J_{G_v-v}<\varphi(G).
\end{equation}
Therefore, the result follows by \eqref{exact}, \eqref{first}, \eqref{second} and \eqref{third}.
\end{proof}	
	
\par \medskip Next, we are going to provide a combinatorial compatible map. For this purpose, we assign a graphical invariant to a graph $G$, denoted by $\eta(G)$.
\begin{Definition}
\em
{Let $G$ be a graph and $\mathcal{H}\subseteq E(G)$ with the property that no two elements of $\mathcal{H}$ belong to a clique of $G$. Then, we call the set $\mathcal{H}$, a \emph{clique disjoint edge set} in $G$.
} 
\end{Definition}	

\par Moreover, we set 
$$\eta(G):=\max\{|\mathcal{H}|: \mathcal{H}~\mathrm{is~a~clique~disjoint~edge~set~in}~G\}.$$

\par Now, in the next theorem, we provide a compatible map given by $\eta(G)$.   
\begin{Theorem}\label{main2}
The map $\eta:\mathcal{G}\longrightarrow \NN_0$ is compatible.
\end{Theorem}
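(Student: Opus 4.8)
The plan is to verify the three defining conditions of a compatible map from Definition \ref{varphi} one by one, relying on a single structural principle: deleting isolated vertices, deleting a vertex, or forming $G_v$ changes the family of cliques in a controlled way, so that clique disjoint edge sets can be transported between these graphs. Two facts will be used repeatedly, namely that every clique of $G$ is again a clique of $G_v$ (since $E(G)\subseteq E(G_v)$), and that any clique of $G$ containing a vertex $v$ is contained in $N_G[v]$.

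For condition (a), since $\widehat{G}=G-\mathcal{I}s(G)$ only removes isolated vertices, we have $E(\widehat{G})=E(G)$ and the cliques of $\widehat{G}$ with at least two vertices are precisely those of $G$; hence any clique disjoint edge set of $\widehat{G}$ is one of $G$, giving $\eta(\widehat{G})\le\eta(G)$. For condition (b), when $G=\dot{\cup}_{i=1}^t K_{n_i}$ with each $n_i\ge 2$, I would pick one edge from each component; since every clique lies inside a single component, these $t$ edges are pairwise clique disjoint, so $\eta(G)\ge t$. For condition (c)(1) I would prove the stronger statement that $\eta(G-v)\le\eta(G)$ for \emph{every} vertex $v$: a clique disjoint edge set $\mathcal{H}$ of $G-v$ uses no edge at $v$, and if two of its edges lay in a common clique $C$ of $G$, then $C\setminus\{v\}$ would still be a clique containing both, contradicting clique disjointness in $G-v$.

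The crux is condition (c)(2), the strict inequality $\eta(G_v)<\eta(G)$. Since $G$ is not a disjoint union of complete graphs, some vertex is non-free (otherwise an induced path on three vertices would have a non-free centre, so every component would be complete), and I would fix any non-free $v$ together with a pair $a,b\in N_G(v)$ with $ab\notin E(G)$. The key observations are that forming $G_v$ does not alter the neighbourhood of $v$, so $K:=N_{G_v}[v]=N_G[v]$ is a clique of $G_v$, and that every clique of $G$ through $v$ lies in $K$. Now take a maximum clique disjoint edge set $\mathcal{H}'$ of $G_v$. Because $K$ is a clique of $G_v$, at most one edge of $\mathcal{H}'$ has both endpoints in $K$; letting $\mathcal{H}'_{\mathrm{out}}$ be the edges of $\mathcal{H}'$ not contained in $K$, we get $|\mathcal{H}'_{\mathrm{out}}|\ge\eta(G_v)-1$. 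I then propose the set $\mathcal{H}:=\mathcal{H}'_{\mathrm{out}}\cup\{\{v,a\},\{v,b\}\}$ in $G$ and claim it is clique disjoint of size $\ge\eta(G_v)+1$, forcing $\eta(G)\ge\eta(G_v)+1$.

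Verifying that $\mathcal{H}$ is clique disjoint in $G$ is the main obstacle, and it splits into three checks. The edges of $\mathcal{H}'_{\mathrm{out}}$ stay pairwise clique disjoint because every clique of $G$ is a clique of $G_v$; the edges $\{v,a\}$ and $\{v,b\}$ avoid a common clique because $ab\notin E(G)$; and a clique of $G$ containing $\{v,a\}$ or $\{v,b\}$ must lie in $K$ by the observation above, whereas every edge of $\mathcal{H}'_{\mathrm{out}}$ has an endpoint outside $K$, so no shared clique can occur. Since $\{v,a\},\{v,b\}\in K$ are disjoint from $\mathcal{H}'_{\mathrm{out}}$, the size is $|\mathcal{H}'_{\mathrm{out}}|+2\ge\eta(G_v)+1$, which completes (c)(2). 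The delicate point throughout is keeping straight that ``contained in $K$'' means \emph{both} endpoints in $K$, and exploiting $N_{G_v}(v)=N_G(v)$ so that the two new edges at $v$ genuinely increase the count rather than merely compensating for the single edge of $\mathcal{H}'$ lost inside $K$.
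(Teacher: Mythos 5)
Your proof is correct, and for the crucial condition (c)(2) it takes a genuinely more streamlined route than the paper's. The paper also ends up trading at most one edge for the two edges $\{v,\alpha\},\{v,\beta\}$ at a non-free vertex, but it reaches this through an explicit case analysis on a maximum clique disjoint edge set $\mathcal{H}$ of $G_v$: whether $v$ lies on some edge of $\mathcal{H}$, whether some edge of $\mathcal{H}$ fails to be an edge of $G$, and, when $\mathcal{H}\subseteq E(G)$, which edges of $\mathcal{H}$ share a clique of $G$ with $\{v,\alpha\}$ or with $\{v,\beta\}$ (showing these conflicts must all involve one and the same edge $e_i$). Your observation that $K=N_G[v]$ is a clique of $G_v$, so that at most one edge of $\mathcal{H}'$ can have both endpoints in $K$ while every other edge has an endpoint outside $K$, collapses all of these cases into a single uniform step; it buys a shorter argument and makes transparent why exactly one edge may need to be sacrificed. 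The one point you should state explicitly is that $\mathcal{H}'_{\mathrm{out}}\subseteq E(G)$: this holds because $E(G_v)\setminus E(G)$ consists of pairs of neighbours of $v$, hence of edges with both endpoints in $K$, so any edge of $G_v$ with an endpoint outside $K$ is already an edge of $G$ --- a fact you use implicitly, but which is needed before one can even speak of $\mathcal{H}$ being a clique disjoint edge set \emph{in} $G$. Conditions (a), (b) and (c)(1) are handled essentially as in the paper.
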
	
\begin{proof}
Let $G\in \mathcal{G}$. It is clear that $\eta(\widehat{G})=\eta(G)$. Moreover, if $G=\dot{\cup}_{i=1}^{t}K_{n_i}$, where $n_i\geq 2$ for every $1\leq i\leq t$, then we have that $\eta(G)=t$. Therefore, it is enough to see that $\eta$ satisfies condition $(c)$ of Definition \ref{varphi}.
\par Assume that $G$ is not a disjoint union of complete graphs. Therefore, there exists $v\in V(G)$ such that $v$ is not a free vertex of $G$. We first observe that $\eta(G-v)\leq \eta(G)$. This indeed follows from the fact that every clique disjoint edge set in $G-v$ is also a clique disjoint edge set in $G$, since $G-v$ is an induced subgraph of $G$.
\par Now assume that $\eta(G_v)=|\mathcal{H}|$, where $\mathcal{H}=\{e_1,\ldots,e_{\eta(G_v)}\}$ is a clique disjoint edge set in $G_v$. We consider the following cases:
\par First assume that $v\in \bigcup \limits_{\substack{e_i \in \mathcal{H}}}e_i$. Without loss of generality assume that $v\in e_1$. Note that $v\notin e_j$, for every $2\leq j \leq \eta(G_v)$. Indeed, assume on the contrary that $v\in e_j$, for some $2\leq j \leq \eta(G_v)$. Then, the edges $e_1$ and $e_j$ belong to a clique of $G_v$, a contradiction.
\par On the other hand, we have that $\mathcal{H}\setminus \{e_1\}\subseteq E(G)$. Indeed, otherwise assume that $e_j=\{u_j,w_j\}\notin E(G)$ for some  $2\leq j \leq \eta(G_v)$. Therefore, we have that $\{v,u_j\}\in E(G)$ and $\{v,w_j\}\in E(G)$. This implies that $e_1$ and $e_j$ belong to a clique of $G_v$, which is a contradiction. Also, since $v$ is not a free vertex of $G$, there exist vertices $\alpha$ and $\beta$ of $G$ such that $\{\alpha,\beta\}\subseteq N_G(v)$ and $\{\alpha,\beta\}\notin E(G)$. Now, it is observed that $\mathcal{H'}=\{\{v,\alpha\},\{v,\beta\},e_2,\ldots,e_{\eta(G_v)}\}$ is a clique disjoint edge set in $G$. Indeed, otherwise assume that either $\{v,\alpha\}$ and $e_j$ or $\{v,\beta\}$ and $e_{j'}$ belong to a clique of $G$ for some $2\leq j,j'\leq \eta(G_v)$. Then, $e_1$ and $e_j$ or $e_1$ and $e_{j'}$ belong to a clique of $G_v$, a contradiction. Also, we have that $\{\{v,\alpha\},\{v,\beta\}\}\cap \{e_2,\ldots,e_{\eta(G_v)}\}=\emptyset$, since $v\notin e_j$, for every $2\leq j \leq \eta(G_v)$. This implies that $|\mathcal{H'}|=\eta(G_v)+1$. Therefore, in this case we have that $\eta(G)\geq \eta(G_v)+1$, as desired.
\par Next assume that $v\notin \bigcup \limits_{\substack{e_i \in \mathcal{H}}}e_i$. Now, if there exists $j=1,\ldots,\eta(G_v)$ with $e_j=\{u_j,w_j\}\notin E(G)$, then with the same argument as used in the previous case, one could see that $\mathcal{H'}=(\mathcal{H}\setminus\{e_j\})\cup\{\{v,u_j\},\{v,w_j\}\}$ is a clique disjoint edge set in $G$ with $|\mathcal{H'}|=\eta(G_v)+1$. This implies that $\eta(G)\geq \eta(G_v)+1$. So, we may assume that $\mathcal{H}\subseteq E(G)$. Since  $v$ is not a free vertex of $G$, there exist vertices $\alpha,\beta \in N_G(v)$ such that $\{\alpha,\beta\}\notin E(G)$.  Notice that if for each $1\leq i \leq \eta(G_v)$ the edges $e_i$ and $\{v,\alpha\}$ do not belong to a clique of $G$, then $\mathcal{H}_{\alpha}=\mathcal{H}\cup \{\{v,\alpha\}\}$ is a clique disjoint edge set in $G$. Similarly, if for each $1\leq i \leq \eta(G_v)$ the edges $e_i$ and $\{v,\beta\}$ do not belong to a clique of $G$, then $\mathcal{H}_{\beta}=\mathcal{H}\cup \{\{v,\beta\}\}$ is a clique disjoint edge set in $G$. Thus, we get the desired result. Therefore, we assume that $e_i$ and $\{v,\alpha\}$ belong to a clique of $G$ and also $e_j$ and $\{v,\beta\}$ belong to a clique of $G$ for some $e_i,e_j\in \mathcal{H}$. This implies that $i=j$, otherwise $e_i$ and $e_j$ belong to a clique of $G_v$, which is a contradiction. Now, it is seen that 
$$\mathcal{H''}=(\mathcal{H}\setminus\{e_i\})\cup\{\{v,\alpha\},\{v,\beta\}\}$$
is a clique disjoint edge set in $G$ with $|\mathcal{H''}|=\eta(G_v)+1$, and hence $\eta(G_v)<\eta(G)$, as desired.
\end{proof}
\par Now, combining Theorem \ref{main1} and Theorem \ref{main2} we obtain: 
\begin{Corollary}\label{eta}
Let $G$ be a graph on $[n]$. Then 
$$\reg S/J_G\leq \eta(G).$$
\end{Corollary}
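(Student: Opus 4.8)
The plan is to deduce the bound directly from the two results just established, with $\eta$ playing the role of the abstract compatible map. Since Theorem~\ref{main1} yields $\reg S/J_G \leq \varphi(G)$ for an \emph{arbitrary} compatible map $\varphi$, it suffices to specialize $\varphi$ to a map that is simultaneously compatible and equal to the combinatorial quantity by which we wish to bound the regularity. Theorem~\ref{main2} supplies precisely such a map: it verifies that $\eta: \mathcal{G} \longrightarrow \NN_0$ meets conditions $(a)$, $(b)$, and $(c)$ of Definition~\ref{varphi}.

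Concretely, first I would invoke Theorem~\ref{main2} to record that $\eta$ is compatible. Then I would apply Theorem~\ref{main1} with the choice $\varphi = \eta$; the only hypothesis of that theorem is that $\varphi$ be compatible, which is now in hand. The inequality $\reg S/J_G \leq \eta(G)$ follows at once.

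There is no genuine obstacle in the corollary itself, as all the substantive work has been front-loaded into the two theorems: the induction on $iv(G)$ together with the regularity lemma applied to the short exact sequence furnishes the abstract bound of Theorem~\ref{main1}, while the combinatorial case analysis on clique disjoint edge sets establishes the compatibility of $\eta$ in Theorem~\ref{main2}. One could, in principle, give a self-contained argument by rerunning the induction of Theorem~\ref{main1} with $\eta$ substituted for $\varphi$ throughout, using the base case and the inequalities for $\eta(G_v)$ and $\eta(G-v)$ extracted from the proof of Theorem~\ref{main2}; but this would merely duplicate the reasoning, so the cleaner route is to factor the statement through the notion of a compatible map and simply combine the two theorems.
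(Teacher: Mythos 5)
Your proposal is correct and matches the paper exactly: the corollary is obtained by combining Theorem~\ref{main2} (which shows $\eta$ is compatible) with Theorem~\ref{main1} applied to $\varphi=\eta$. Nothing further is needed.
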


We would like to remark that the above upper bound for the regularity could be sharp. For instance, let $G$ be the graph illustrated in Figure \ref{sharp1}. Then, $\eta(G)=4$. Also, $\reg S/J_G=4$ by  \cite[Proposition~3.8]{KS}.

\begin{figure}[H]
\centering
\begin{tikzpicture}[scale=1.1,line cap=round,line join=round,>=triangle 45,x=1.0cm,y=1.0cm]
\draw (9.,2.)-- (9.,1.);
\draw (8.,0.)-- (9.,1.);
\draw (9.,1.)-- (10.,0.);
\draw (10.,0.)-- (8.,0.);
\draw (8.,-1.)-- (8.,0.);
\draw (10.,0.)-- (10.,-1.);
\begin{scriptsize}
\draw [fill=blue] (9.,1.) circle (1.5pt);
\draw [fill=blue] (8.,0.) circle (1.5pt);
\draw [fill=blue] (10.,0.) circle (1.5pt);
\draw [fill=blue] (8.,-1.) circle (1.5pt);
\draw [fill=blue] (10.,-1.) circle (1.5pt);
\draw [fill=blue] (9.,2.) circle (1.5pt);
\end{scriptsize}
\end{tikzpicture}
\vspace{3mm}
\caption{A graph $G$ with $\reg S/J_G=\eta(G)=4$.}
\label{sharp1}
\end{figure}
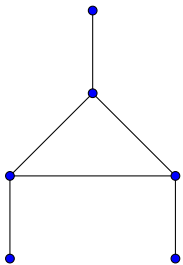

On the other hand, there are graphs $G$ for which $\reg S/J_G<\eta(G)$. For example, let $G$ be the closed graph illustrated in Figure \ref{closed}  with $\eta(G)=4$ and $\mathcal{L}(G)=3$, where $\mathcal{L}(G)$ is  the length of a longest induced path of $G$. Then, by \cite[Theorem~2.2]{EZ} we have $\reg S/J_G=3$. In addition, $G-v$ is a closed graph too, with $\reg S_v/J_{G-v}=\mathcal{L}(G-v)=\eta(G-v)=3$, for every vertex $v$ of $G$.

\begin{figure}[H]
\centering
\begin{tikzpicture}[scale=1.5,line cap=round,line join=round,>=triangle 45,x=1.0cm,y=1.0cm]
\draw (2.,0.)-- (1.,0.);
\draw (0.,1.)-- (-1.,0.);
\draw (-1.,0.)-- (0.,0.);
\draw (0.,1.)-- (1.,1.);
\draw (1.,1.)-- (2.,0.);
\draw (0.,0.)-- (0.,1.);
\draw (0.,0.)-- (1.,1.);
\draw (0.,0.)-- (1.,0.);
\draw (1.,0.)-- (1.,1.);
\begin{scriptsize}
\draw [fill=blue] (0.,1.) circle (1.1pt);
\draw [fill=blue] (1.,1.) circle (1.1pt);
\draw [fill=blue] (2.,0.) circle (1.1pt);
\draw [fill=blue] (1.,0.) circle (1.1pt);
\draw [fill=blue] (0.,0.) circle (1.1pt);
\draw [fill=blue] (-1.,0.) circle (1.1pt);
\end{scriptsize}
\end{tikzpicture}
\vspace{3mm}
\caption{A closed graph $G$ with $\reg S/J_G=\mathcal{L}(G)<\eta(G)$.}
\label{closed}
\end{figure}
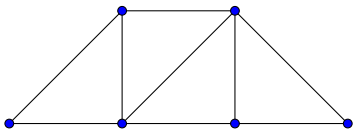

\par \medskip It is clear that $\eta(G)\leq c(G)$, for every graph $G$. Therefore, as a consequence of Corollary \ref{eta} we get the following upper bound for the regularity of binomial edge ideals, which settles Conjecture \ref{conj} affirmatively.

\begin{Corollary}\label{cg}
Let $G$ be a graph on $[n]$. Then 
$$\reg S/J_G\leq c(G).$$
\end{Corollary}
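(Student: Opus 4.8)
The plan is to derive Corollary~\ref{cg} as an immediate consequence of Corollary~\ref{eta} together with the elementary inequality $\eta(G)\leq c(G)$. Since Corollary~\ref{eta} already gives $\reg S/J_G\leq \eta(G)$, it suffices to verify that the maximum size of a clique disjoint edge set never exceeds the number of maximal cliques of $G$.

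To establish $\eta(G)\leq c(G)$, I would argue as follows. Let $\mathcal{H}=\{e_1,\dots,e_k\}$ be a clique disjoint edge set in $G$ of maximum size, so that $k=\eta(G)$. Each edge $e_i$ is contained in at least one maximal clique of $G$; choose for each $i$ a maximal clique $C_i$ with $e_i\subseteq C_i$. I claim that the map $e_i\mapsto C_i$ is injective. Indeed, if $C_i=C_j$ for some $i\neq j$, then the two distinct edges $e_i$ and $e_j$ would both lie in the single clique $C_i$, contradicting the defining property of a clique disjoint edge set that no two of its edges belong to a common clique of $G$. Hence the $C_i$ are pairwise distinct maximal cliques, so $\eta(G)=k\leq c(G)$.

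Combining this inequality with Corollary~\ref{eta} yields
\[
\reg S/J_G\leq \eta(G)\leq c(G),
\]
which is exactly the assertion of Conjecture~\ref{conj}, thereby settling it in full generality.

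The argument is genuinely short, and I do not anticipate any serious obstacle: the only point requiring care is the observation that each edge lies in some maximal clique (which is automatic, as any edge $\{i,j\}$ is itself a clique and hence extends to a maximal one) and the injectivity of the assignment to maximal cliques, both of which follow directly from the definition of a clique disjoint edge set. The real content has already been carried out in Theorems~\ref{main1} and~\ref{main2} and in Corollary~\ref{eta}; this final corollary is simply the packaging step that compares the new invariant $\eta(G)$ with the previously studied quantity $c(G)$.
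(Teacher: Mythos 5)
Your proposal is correct and follows exactly the paper's route: the paper deduces the corollary from Corollary~\ref{eta} together with the (asserted as clear) inequality $\eta(G)\leq c(G)$, which you simply spell out via the injective assignment of each edge in a clique disjoint edge set to a maximal clique containing it. No issues.
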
	
	
Note that there are some graphs $G$ for which $\reg S/J_G$ attains the upper bound $\eta(G)$ with $\eta(G)<c(G)$. For example, the graph $G_1$  depicted in Figure~\ref{family} has this property. Indeed, we have that $c(G_1)=4$. Moreover, it is easily seen that $\mathcal{L}(G_1)=\eta(G_1)=3$.  Therefore, we have $\reg S/J_G=3$, since the upper bound given in Corollary \ref{eta} coincides with the lower bound given in \cite[Theorem~1.1]{MM}.

\par Furthermore, we would like to construct an infinite family $\{G_n\}_{n=1}^{\infty}$ of graphs to show that the difference between the upper bounds $\eta(G_n)$ and $c(G_n)$ could be big enough for sufficiently large values of $n$. For this aim, let $G_1$ be the left side graph depicted in Figure \ref{family}. We follow the pictorial pattern illuminated in Figure~\ref{family} to obtain the graph $G_n$ for every $n\geq 2$, by replacing any triangle of $G_{n-1}$ by a copy of $G_1$. It is observed that $c(G_n)=4^n$ and $\eta(G_n)\leq 3\times 4^{n-1}$. Indeed, the latter inequality follows from the facts that $\eta(G_1)=3$ and also the graph $G_n$ is covered by $4^{n-1}$ copies of $G_1$, for every $n\geq 2$. Thus, $$\lim_{n \to \infty}(c(G_n)-\eta(G_n))=\infty.$$

\begin{figure}[H]
\definecolor{qqqqff}{rgb}{0.,0.,1.}
\centering
\begin{tikzpicture}[scale=1.1,line cap=round,line join=round,>=triangle 45,x=1.0cm,y=1.0cm]
\draw [line width=1.3pt] (1.5,2.6)-- (0.,0.);
\draw [line width=1.3pt] (1.5,2.6)-- (3.,0.);
\draw [line width=1.3pt] (0.,0.)-- (3.,0.);
\draw [line width=1.3pt] (0.7524972253052165,1.3043285238623752)-- (2.24850166481687,1.302597114317425);
\draw [line width=1.3pt] (1.5,0.)-- (0.7524972253052165,1.3043285238623752);
\draw [line width=1.3pt] (1.5,0.)-- (2.24850166481687,1.302597114317425);
\draw (1.874250832408435,1.9512985571587125)-- (1.1262486126526081,1.9521642619311876);
\draw (1.5004994450610432,1.3034628190899)-- (1.874250832408435,1.9512985571587125);
\draw (1.1262486126526081,1.9521642619311876)-- (1.5004994450610432,1.3034628190899);
\draw (1.874250832408435,0.6512985571587125)-- (1.1262486126526081,0.6521642619311876);
\draw (1.1262486126526081,0.6521642619311876)-- (1.5004994450610432,1.3034628190899);
\draw (1.874250832408435,0.6512985571587125)-- (1.5004994450610432,1.3034628190899);
\draw (1.1262486126526081,0.6521642619311876)-- (0.3762486126526082,0.6521642619311876);
\draw (0.3762486126526082,0.6521642619311876)-- (0.75,0.);
\draw (0.75,0.)-- (1.1262486126526081,0.6521642619311876);
\draw (1.874250832408435,0.6512985571587125)-- (2.624250832408435,0.6512985571587125);
\draw (2.25,0.)-- (2.624250832408435,0.6512985571587125);
\draw (2.25,0.)-- (1.874250832408435,0.6512985571587125);
\draw (-1.,0.)-- (-2.5,0.);
\draw (-2.5,0.)-- (-4.,0.);
\draw (-3.25,1.3)-- (-4.,0.);
\draw (-2.5,2.6)-- (-3.25,1.3);
\draw (-2.5,2.6)-- (-1.75,1.3);
\draw (-1.75,1.3)-- (-1.,0.);
\draw (-3.25,1.3)-- (-1.75,1.3);
\draw (-2.5,0.)-- (-1.75,1.3);
\draw (-2.5,0.)-- (-3.25,1.3);
\draw [line width=1.3pt] (7.,0.)-- (6.625,0.);
\draw [line width=1.3pt] (6.625,0.)-- (6.25,0.);
\draw [line width=1.3pt] (5.875,0.)-- (6.25,0.);
\draw [line width=1.3pt] (5.875,0.)-- (5.5,0.);
\draw [line width=1.3pt] (5.5,0.)-- (5.125,0.);
\draw [line width=1.3pt] (5.125,0.)-- (4.75,0.);
\draw [line width=1.3pt] (4.75,0.)-- (4.375,0.);
\draw [line width=1.3pt] (4.375,0.)-- (4.,0.);
\draw [line width=1.3pt] (5.5,2.6)-- (5.3125,2.275);
\draw [line width=1.3pt] (5.125,1.95)-- (5.3125,2.275);
\draw [line width=1.3pt] (5.125,1.95)-- (4.9375,1.625);
\draw [line width=1.3pt] (4.9375,1.625)-- (4.75,1.3);
\draw [line width=1.3pt] (4.75,1.3)-- (4.5625,0.975);
\draw [line width=1.3pt] (4.5625,0.975)-- (4.375,0.65);
\draw (4.375,0.65)-- (4.1875,0.325);
\draw [line width=1.3pt] (4.375,0.65)-- (4.,0.);
\draw [line width=1.3pt] (5.5,2.6)-- (5.6875,2.275);
\draw (5.3125,2.275)-- (5.6875,2.275);
\draw (5.5,1.95)-- (5.3125,2.275);
\draw (5.5,1.95)-- (5.6875,2.275);
\draw [line width=1.3pt] (5.875,1.95)-- (5.6875,2.275);
\draw [line width=1.3pt] (5.875,1.95)-- (5.5,1.95);
\draw [line width=1.3pt] (5.125,1.95)-- (5.5,1.95);
\draw [line width=1.3pt] (7.,0.)-- (6.8125,0.325);
\draw [line width=1.3pt] (6.8125,0.325)-- (6.625,0.65);
\draw [line width=1.3pt] (6.625,0.65)-- (6.4375,0.975);
\draw [line width=1.3pt] (6.4375,0.975)-- (6.25,1.3);
\draw [line width=1.3pt] (6.25,1.3)-- (6.0625,1.625);
\draw [line width=1.3pt] (6.0625,1.625)-- (5.875,1.95);
\draw (6.0625,1.625)-- (5.6875,1.625);
\draw (5.3125,1.625)-- (5.6875,1.625);
\draw (4.375,0.)-- (4.1875,0.325);
\draw (4.1875,0.325)-- (4.5625,0.325);
\draw (4.5625,0.325)-- (4.9375,0.325);
\draw (4.9375,0.325)-- (5.3125,0.325);
\draw (5.6875,0.325)-- (6.0625,0.325);
\draw (6.0625,0.325)-- (6.4375,0.325);
\draw (6.4375,0.325)-- (6.8125,0.325);
\draw [line width=1.3pt] (6.625,0.65)-- (6.25,0.65);
\draw [line width=1.3pt] (5.5,0.65)-- (5.125,0.65);
\draw [line width=1.3pt] (5.125,0.65)-- (4.75,0.65);
\draw [line width=1.3pt] (4.75,0.65)-- (4.375,0.65);
\draw (4.5625,0.975)-- (4.9375,0.975);
\draw (4.9375,0.975)-- (5.3125,0.975);
\draw (5.3125,0.975)-- (5.6875,0.975);
\draw (6.0625,0.975)-- (5.6875,0.975);
\draw (6.4375,0.975)-- (6.0625,0.975);
\draw [line width=1.3pt] (6.25,1.3)-- (5.875,1.3);
\draw [line width=1.3pt] (5.875,1.3)-- (5.5,1.3);
\draw [line width=1.3pt] (5.5,1.3)-- (5.125,1.3);
\draw [line width=1.3pt] (5.125,1.3)-- (4.75,1.3);
\draw [line width=1.3pt] (5.875,1.95)-- (5.6875,1.625);
\draw (5.3125,1.625)-- (5.5,1.95);
\draw (5.6875,1.625)-- (5.5,1.95);
\draw (5.125,1.3)-- (5.3125,1.625);
\draw (4.9375,0.975)-- (5.125,1.3);
\draw (4.5625,0.325)-- (4.75,0.65);
\draw (4.375,0.)-- (4.5625,0.325);
\draw [line width=1.3pt] (4.5625,0.325)-- (4.375,0.65);
\draw [line width=1.3pt] (4.75,0.)-- (4.5625,0.325);
\draw (4.75,0.65)-- (4.5625,0.975);
\draw (6.625,0.)-- (6.8125,0.325);
\draw (6.625,0.)-- (6.4375,0.325);
\draw [line width=1.3pt] (6.25,0.)-- (6.4375,0.325);
\draw [line width=1.3pt] (6.25,0.)-- (6.0625,0.325);
\draw (5.875,0.)-- (6.0625,0.325);
\draw [line width=1.3pt] (5.6875,0.325)-- (5.5,0.);
\draw (5.875,0.)-- (5.6875,0.325);
\draw [line width=1.3pt] (5.5,0.)-- (5.3125,0.325);
\draw (5.125,0.)-- (5.3125,0.325);
\draw (5.125,0.)-- (4.9375,0.325);
\draw [line width=1.3pt] (4.75,0.)-- (4.9375,0.325);
\draw (4.9375,0.325)-- (4.75,0.65);
\draw [line width=1.3pt] (4.9375,0.325)-- (5.125,0.65);
\draw [line width=1.3pt] (5.3125,0.325)-- (5.125,0.65);
\draw (5.3125,0.325)-- (5.5,0.65);
\draw (5.6875,0.325)-- (5.5,0.65);
\draw [line width=1.3pt] (5.6875,0.325)-- (5.875,0.65);
\draw [line width=1.3pt] (6.0625,0.325)-- (5.875,0.65);
\draw (6.0625,0.325)-- (6.25,0.65);
\draw (6.4375,0.325)-- (6.25,0.65);
\draw [line width=1.3pt] (6.4375,0.325)-- (6.625,0.65);
\draw (6.25,0.65)-- (6.4375,0.975);
\draw [line width=1.3pt] (6.25,0.65)-- (5.875,0.65);
\draw (6.25,0.65)-- (6.0625,0.975);
\draw [line width=1.3pt] (5.875,0.65)-- (6.0625,0.975);
\draw [line width=1.3pt] (5.875,0.65)-- (5.5,0.65);
\draw [line width=1.3pt] (5.875,0.65)-- (5.6875,0.975);
\draw [line width=1.3pt] (5.125,0.65)-- (4.9375,0.975);
\draw (5.5,0.65)-- (5.6875,0.975);
\draw (5.5,0.65)-- (5.3125,0.975);
\draw [line width=1.3pt] (5.125,0.65)-- (5.3125,0.975);
\draw (5.3125,0.975)-- (5.125,1.3);
\draw [line width=1.3pt] (4.9375,0.975)-- (4.75,1.3);
\draw [line width=1.3pt] (5.3125,0.975)-- (5.5,1.3);
\draw [line width=1.3pt] (5.6875,0.975)-- (5.5,1.3);
\draw (5.6875,0.975)-- (5.875,1.3);
\draw [line width=1.3pt] (6.0625,0.975)-- (6.25,1.3);
\draw (6.0625,0.975)-- (5.875,1.3);
\draw [line width=1.3pt] (5.5,1.3)-- (5.3125,1.625);
\draw [line width=1.3pt] (5.5,1.3)-- (5.6875,1.625);
\draw (5.875,1.3)-- (5.6875,1.625);
\draw (5.875,1.3)-- (6.0625,1.625);
\draw (4.9375,0.975)-- (4.75,0.65);
\draw (5.125,1.3)-- (4.9375,1.625);
\draw (4.9375,1.625)-- (5.3125,1.625);
\draw [line width=1.3pt] (5.125,1.95)-- (5.3125,1.625);
\draw (-2.761728395061727,-0.13098765432097778) node[anchor=north west] {$G_1$};
\draw (1.2982716049382772,-0.15098765432097788) node[anchor=north west] {$G_2$};
\draw (5.278271604938282,-0.09098765432097757) node[anchor=north west] {$G_3$};
\draw (5.3125,0.325)-- (5.6875,0.325);
\begin{scriptsize}
\draw [fill=qqqqff] (0.,0.) circle (1.5pt);
\draw [fill=qqqqff] (3.,0.) circle (1.5pt);
\draw [fill=qqqqff] (1.5,2.6) circle (1.5pt);
\draw [fill=qqqqff] (0.7524972253052165,1.3043285238623752) circle (1.5pt);
\draw [fill=qqqqff] (2.24850166481687,1.302597114317425) circle (1.5pt);
\draw [fill=qqqqff] (1.5,0.) circle (1.5pt);
\draw [fill=qqqqff] (1.1262486126526081,1.9521642619311876) circle (1.5pt);
\draw [fill=qqqqff] (1.874250832408435,1.9512985571587125) circle (1.5pt);
\draw [fill=qqqqff] (1.5004994450610432,1.3034628190899) circle (1.5pt);
\draw [fill=qqqqff] (0.3762486126526082,0.6521642619311876) circle (1.5pt);
\draw [fill=qqqqff] (1.1262486126526081,0.6521642619311876) circle (1.5pt);
\draw [fill=qqqqff] (1.874250832408435,0.6512985571587125) circle (1.5pt);
\draw [fill=qqqqff] (2.25,0.) circle (1.5pt);
\draw [fill=qqqqff] (0.75,0.) circle (1.5pt);
\draw [fill=qqqqff] (2.624250832408435,0.6512985571587125) circle (1.5pt);
\draw [fill=qqqqff] (-1.,0.) circle (1.5pt);
\draw [fill=qqqqff] (-4.,0.) circle (1.5pt);
\draw [fill=qqqqff] (-2.5,0.) circle (1.5pt);
\draw [fill=qqqqff] (-2.5,2.6) circle (1.5pt);
\draw [fill=qqqqff] (-3.25,1.3) circle (1.5pt);
\draw [fill=qqqqff] (-1.75,1.3) circle (1.5pt);
\draw [fill=qqqqff] (4.,0.) circle (1.5pt);
\draw [fill=qqqqff] (7.,0.) circle (1.5pt);
\draw [fill=qqqqff] (5.5,0.) circle (1.5pt);
\draw [fill=qqqqff] (5.5,2.6) circle (1.5pt);
\draw [fill=qqqqff] (4.75,1.3) circle (1.5pt);
\draw [fill=qqqqff] (6.25,1.3) circle (1.5pt);
\draw [fill=qqqqff] (4.75,0.) circle (1.5pt);
\draw [fill=qqqqff] (6.25,0.) circle (1.5pt);
\draw [fill=qqqqff] (4.375,0.65) circle (1.5pt);
\draw [fill=qqqqff] (5.125,1.95) circle (1.5pt);
\draw [fill=qqqqff] (5.875,1.95) circle (1.5pt);
\draw [fill=qqqqff] (6.625,0.65) circle (1.5pt);
\draw [fill=qqqqff] (5.125,0.65) circle (1.5pt);
\draw [fill=qqqqff] (5.875,0.65) circle (1.5pt);
\draw [fill=qqqqff] (5.5,1.3) circle (1.5pt);
\draw [fill=qqqqff] (4.375,0.) circle (1.5pt);
\draw [fill=qqqqff] (4.1875,0.325) circle (1.5pt);
\draw [fill=qqqqff] (4.5625,0.325) circle (1.5pt);
\draw [fill=qqqqff] (4.9375,0.325) circle (1.5pt);
\draw [fill=qqqqff] (5.3125,0.325) circle (1.5pt);
\draw [fill=qqqqff] (5.125,0.) circle (1.5pt);
\draw [fill=qqqqff] (5.6875,0.325) circle (1.5pt);
\draw [fill=qqqqff] (5.875,0.) circle (1.5pt);
\draw [fill=qqqqff] (6.0625,0.325) circle (1.5pt);
\draw [fill=qqqqff] (6.4375,0.325) circle (1.5pt);
\draw [fill=qqqqff] (6.625,0.) circle (1.5pt);
\draw [fill=qqqqff] (6.8125,0.325) circle (1.5pt);
\draw [fill=qqqqff] (1.5,0.) circle (1.5pt);
\draw [fill=qqqqff] (4.5625,0.975) circle (1.5pt);
\draw [fill=qqqqff] (4.75,0.65) circle (1.5pt);
\draw [fill=qqqqff] (5.5,0.65) circle (1.5pt);
\draw [fill=qqqqff] (6.25,0.65) circle (1.5pt);
\draw [fill=qqqqff] (4.9375,0.975) circle (1.5pt);
\draw [fill=qqqqff] (4.9375,1.625) circle (1.5pt);
\draw [fill=qqqqff] (5.3125,2.275) circle (1.5pt);
\draw [fill=qqqqff] (6.4375,0.975) circle (1.5pt);
\draw [fill=qqqqff] (6.0625,1.625) circle (1.5pt);
\draw [fill=qqqqff] (5.6875,2.275) circle (1.5pt);
\draw [fill=qqqqff] (5.5,1.95) circle (1.5pt);
\draw [fill=qqqqff] (5.3125,0.975) circle (1.5pt);
\draw [fill=qqqqff] (5.125,1.3) circle (1.5pt);
\draw [fill=qqqqff] (5.3125,1.625) circle (1.5pt);
\draw [fill=qqqqff] (5.6875,1.625) circle (1.5pt);
\draw [fill=qqqqff] (5.875,1.3) circle (1.5pt);
\draw [fill=qqqqff] (6.0625,0.975) circle (1.5pt);
\draw [fill=qqqqff] (5.6875,0.975) circle (1.5pt);
\end{scriptsize}
\end{tikzpicture}
\vspace{3mm}
\caption{The infinite family $\{G_n\}_{n=1}^{\infty}$ of graphs with $\lim_{n \to \infty}(c(G_n)-\eta(G_n))=\infty$.}
\label{family}
\end{figure}
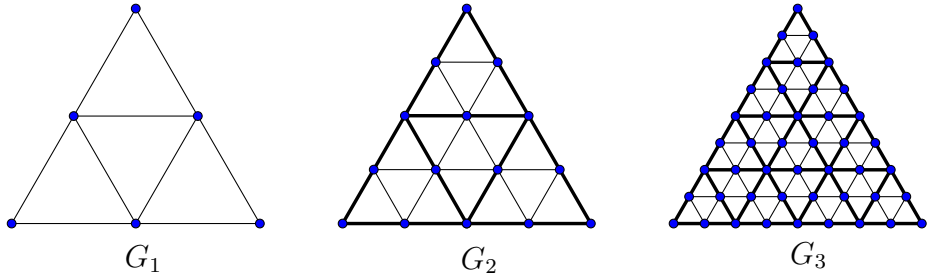
	
\par \medskip Finally, we would like to end this paper with asking a natural question if there is an explicit combinatorial characterization of graphs $G$ with $\mathcal{L}(G)=\eta(G)$. Notice that finding such characterization yields a precise formula for the regularity of the desired class of graphs. It is worth mentioning here that a characterization of chordal graphs $G$ with $\mathcal{L}(G)=c(G)$ was given in \cite[Theorem~4.2]{RSK}. Such graphs are called \emph{strong interval} graphs, which is clear that they satisfy the equality $\mathcal{L}(G)=\eta(G)$ as well.

\vspace{1cm}
\par \textbf{Acknowledgments:} The authors would like to thank the Institute for Research in Fundamental Sciences (IPM) for financial support. The research of the second author was in part supported by a grant from IPM (No. 99130113). The research of the third author was in part supported by a grant from IPM (No. 99050211).


\begin{thebibliography}{99}


		
\bibitem{A} J. \`Alvarez Montaner, {\em Local cohomology of binomial edge ideals and their generic initial ideals}, Collect. Math.  (2019),  https://doi.org/10.1007/s13348-019-00268-z.

\bibitem{BN} A. Banerjee, L. L. N\'u\~nez-Betancourt, {\em  Graph connectivity and binomial edge ideals}, 
Proc. Amer. Math. Soc. 145 (2017), 487-499.
			
\bibitem{BMS} D. Bolognini, A. Macchia, F. Strazzanti, {\em Binomial edge ideals of bipartite graphs}, 
European J. Combin. 70 (2018), 1-25.			
			
\bibitem{EHH} V. Ene, J. Herzog, T. Hibi, {\em Cohen-Macaulay binomial edge ideals}, Nagoya Math. J. 204 (2011), 57-68.
	
\bibitem{ERT} V. Ene, G. Rinaldo, N. Terai, {\em Licci binomial edge ideals}, J. Combin. Theory Ser. A. 175 (2020), 105278, 23 pp.

\bibitem{EZ} V. Ene, A. Zarojanu, {\em On the regularity of binomial edge ideals}, Math. Nachr. 288(1) (2015), 19-24.
		
		
			
\bibitem{HHHKR} J. Herzog, T. Hibi, F. Hreinsd{\'o}ttir, T. Kahle, J. Rauh, {\em Binomial edge ideals and conditional independence statements}, Adv. Appl. Math. 45 (2010), 317-333.
		

		



\bibitem{JK} A. V. Jayanthan, A. Kumar, {\em Regularity of binomial edge ideals of Cohen-Macaulay bipartite graphs}, Comm. Algebra. 47 (2019), 4797-4805.
		

\bibitem{KK} T. Kahle, J. Kr{\"u}semann, {\em Binomial edge ideals of cographs}, (2019), arXiv:1906.05510.


\bibitem{KS} D. Kiani, S. Saeedi Madani, {\em Binomial edge ideals with pure resolutions}, Collect. Math. 65 (2014), 331-340.


\bibitem{KS2} D. Kiani, S. Saeedi Madani, {\em The Castelnuovo-Mumford regularity of binomial edge ideals}, J. Combin. Theory Ser. A. 139 (2016), 80-86.


\bibitem{Kumar} A. Kumar, {\em Regularity bound of generalized binomial edge ideal of graphs}, J. Algebra. 546 (2020), 357-369.

\bibitem{Kumar2} A. Kumar, {\em Binomial edge ideals and bounds for their regularity}, J. Algebraic Combin. (2020), to appear.

\bibitem{KumarS} A. Kumar, R. Sarkar, {\em Depth and extremal Betti number of binomial edge ideals}, Math. Nachr. (2019), to appear. 
		

\bibitem{MM} K. Matsuda, S. Murai, {\em Regularity bounds for binomial edge ideals}, J. Commutative Algebra. 5(1) (2013), 141-149.

\bibitem{O} M. Ohtani, {\em Graphs and ideals generated by some 2-minors}, Comm. Algebra. 39 (2011), 905-917.
		

		
\bibitem{RSK} M. Rouzbahani Malayeri, S. Saeedi Madani, D. Kiani, {\em Regularity of binomial edge ideals of chordal graphs}, (2018) arXiv:1810.03119v1, to appear in Collect. Math.
		
\bibitem{RSK2} M. Rouzbahani Malayeri, S. Saeedi Madani, D. Kiani, {\em Binomial edge ideals of small depth}, J. Algebra.  (2020), to appear. 		
		
		
		
\bibitem{SK} S. Saeedi Madani, D. Kiani, {\em Binomial edge ideals of graphs}, Electronic J. Combin. 19(2) (2012), $\sharp$ P44.
		
\bibitem{SK1} S. Saeedi Madani, D. Kiani, {\em On the binomial edge ideal of a pair of graphs}, Electronic J. of Combinatorics. 20(1) (2013), $\sharp$ P48.
		
\bibitem{SK2} S. Saeedi Madani, D. Kiani, {\em Binomial edge ideals of regularity 3}, J. Algebra. 515 (2018), 157-172.
	

		
		
	\end{thebibliography}
\end{document}